\documentclass[12pt,a4paper]{article}

\usepackage[utf8]{inputenc}
\usepackage[english]{babel}
\usepackage{amsmath,amssymb,amsfonts,amsthm}
\usepackage[top=2.5cm,left=2.5cm,right=2.5cm,bottom=3cm]{geometry}
\usepackage{hyperref}
\hypersetup{hidelinks}

\newtheorem{theorem}{Theorem}
\newtheorem{proposition}[theorem]{Proposition}
\newtheorem{corollary}[theorem]{Corollary}

\theoremstyle{remark}
\newtheorem{remark}[theorem]{Remark}

\DeclareMathOperator{\SU}{SU}
\DeclareMathOperator{\U}{U}
\DeclareMathOperator{\PU}{PU}
\DeclareMathOperator{\Sp}{Sp}
\newcommand{\A}{\mathbb A}
\newcommand{\C}{\mathbb C}
\newcommand{\Q}{\mathbb Q}
\newcommand{\R}{\mathbb R}
\newcommand{\Z}{\mathbb Z}
\newcommand{\g}{\mathfrak g}

\title{Residual finiteness and cuspidal cohomology\\
of Picard modular surfaces}
\author{Richard M. Hill\\
  \small University College London\\
  \small\texttt{r.m.hill@ucl.ac.uk}\\
  \small ORCID: 0000-0001-8528-1486}
\date{}

\begin{document}
\maketitle

\begin{abstract}
We prove that, for every non-uniform arithmetic lattice in $\SU(2,1)$, its
inverse images in the universal cover and in all connected finite covers are
residually finite.
The key new input is that every commensurability class of
such lattices contains a congruence arithmetic lattice $\Gamma$
for which
\[
  H^1_{\mathrm{cusp}}(\Gamma\backslash\mathbb B^2,\C)\ne0.
\]
In particular, the first inner cohomology of this ball quotient is non-zero.
The proof uses Rogawski's endoscopic classification for $\U(3)$.
A cohomological criterion proved
previously by the author then gives the residual-finiteness result.
Residual
finiteness also yields multiplier systems of arbitrary denominator on suitable
finite-index subgroups.

\medskip
\noindent\textbf{Keywords.}
Residual finiteness, Picard modular surface, cuspidal cohomology, endoscopy.

\smallskip
\noindent\textbf{Mathematics Subject Classification.}
11F75, 22E40, 20E26.
\end{abstract}

\section{Introduction}

We prove that, for every non-uniform
arithmetic lattice in $\SU(2,1)$, its inverse images in the universal cover
and in all connected finite covers are residually finite; this is
Corollary~\ref{cor:rf} below.

The route to this result is cohomological.
Let $\Gamma$ be such a lattice and
put
\[
  X_\Gamma=\Gamma\backslash\mathbb B^2,
\]
where $\mathbb B^2$ is the complex hyperbolic plane.
We write
$H^j_{\mathrm{cusp}}(X_\Gamma,\C)$ for the space of harmonic cuspidal
differential $j$-forms on $X_\Gamma$; its relation with ordinary cohomology is
recalled in Section~\ref{sec:automorphic-cohomology}.
The inner cohomology is
defined by
\[
 H^j_!(X_\Gamma,\C)
 =\operatorname{im}\bigl(H^j_c(X_\Gamma,\C)
                  \longrightarrow H^j(X_\Gamma,\C)\bigr).
\]
The author's criterion, recalled in Theorem~\ref{thm:hill-criterion}, states
that if the commensurability class of a non-uniform arithmetic lattice in
$\SU(d,1)$ contains a lattice with non-zero first inner cohomology, then
the inverse image of the original lattice in every connected cover of
$\SU(d,1)$ is residually finite.
Thus Corollary~\ref{cor:rf} follows
from the following automorphic non-vanishing theorem, which is the new input
of this paper.

\begin{theorem}
\label{thm:main}
Let $\Gamma$ be a non-uniform arithmetic lattice in $\SU(2,1)$.
There is a
congruence arithmetic lattice $\Gamma'$ in the commensurability
class of $\Gamma$ such that
\[
 H^1_{\mathrm{cusp}}(X_{\Gamma'},\C)\ne0.
\]
In particular, $H^1_!(X_{\Gamma'},\C)\ne0$.
\end{theorem}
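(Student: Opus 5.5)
We first reduce to an adelic setting. Every non-uniform arithmetic lattice in $\SU(2,1)$ is commensurable with $\SU(L)\cap \mathrm{SL}_3(\mathcal O_E)$, where $E$ is an imaginary quadratic field and $L$ is a hermitian lattice in an isotropic hermitian form $V$ of signature $(2,1)$ over $E/\Q$; non-uniformity forces the $\Q$-rank to be one, so the defining division algebra is trivial. So the commensurability class is determined by the unitary group $G=\U(V)$ over $\Q$ (quasi-split, since $V$ is isotropic). For a compact open $K\subset G(\A_f)$ the Shimura variety $S_K$ is a finite disjoint union of quotients $X_{\Gamma_i}$ by congruence lattices in the class, up to passing between $\U$, $\SU$ and $\PU$; these changes cost only finite index and central characters. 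By Matsushima--Borel--Franke, as recalled in Section~\ref{sec:automorphic-cohomology}, $H^1_{\mathrm{cusp}}(S_K,\C)=\bigoplus_\pi m(\pi)\,H^1(\g,K_\infty;\pi_\infty)\otimes\pi_f^K$, summed over cuspidal $\pi$ of $G(\A)$. It therefore suffices to produce one cuspidal automorphic representation $\pi$ with $H^1(\g,K_\infty;\pi_\infty)\ne0$. Such a $\pi$ has a nonzero $K$-fixed vector for some $K$, hence it contributes to some component $X_{\Gamma'}$, since the $G(\A_f)$-translates permute the components and each component of $S_K$ is a congruence quotient in the class. Once a nonzero cuspidal harmonic $1$-form exists, the statement about $H^1_!$ follows because cusp forms inject into inner cohomology.

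The key step is to construct $\pi$ via Rogawski's endoscopic classification. For $\U(2,1)$ the cohomological representations in degree $1$ with trivial coefficients are the non-tempered $A_\mathfrak q(\lambda)$ modules $\pi^\pm$, which form (together with a discrete series) the archimedean $A$-packets of the endoscopic group $H=\U(1,1)\times\U(1)$ coming from one-dimensional characters of $\U(1,1)$. By Rogawski, given a Hecke character $\xi$ of $\A_E^\times/E^\times$ with suitable restriction to $\A^\times$, and its associated data, there is an $A$-packet $\Pi(\xi)=\bigotimes_v\{\pi_v^n,\pi_v^s\}$ whose members occur in the discrete spectrum with multiplicity given by the sign condition $\prod_v\epsilon(\pi_v)=\epsilon(1/2,\xi')$, where $\xi'$ is an auxiliary conjugate-symplectic character. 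Members with some local component supercuspidal, or non-Langlands-quotient at a finite place, are cuspidal, since the residual spectrum consists only of the one-dimensional and Langlands-quotient members. We therefore plan to choose $\xi$ with the correct infinity type, so that the archimedean packet contains $\pi^\pm_\infty$ (which lie in $H^1$, using the $(1,0)$ and $(0,1)$ types). We then pick an inert prime $p$ where $\xi$ is ramified, so that $\pi_p^s$ is supercuspidal, and adjust the local signs at $\infty$ and $p$ so that the global product matches the root number. Choosing $\pi_p=\pi_p^s$ makes the resulting global member cuspidal, and choosing $\pi_\infty$ cohomological in degree $1$ finishes the construction.

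We expect the main obstacle to be the parity bookkeeping. We need the sign $\epsilon(1/2,\xi')$ and the local signs $\epsilon(\pi_v)$ to be compatible while keeping $\pi_\infty$ in degree $1$ and at least one finite component supercuspidal. We also need this to hold for the given $E$ and $V$, with the central character restrictions coming from $\SU$. Our plan is to use the freedom to twist $\xi$ by characters ramified at an auxiliary inert prime. This changes $\epsilon(1/2,\xi')$ by a computable local factor, and it lets us flip the sign at will, as in Rogawski's and Blasius--Rogawski's constructions. Because the packets at two finite places can each be toggled, both parities are always achievable. A second, milder point is checking the precise $(\g,K_\infty)$-cohomology of $\pi^\pm_\infty$ (degree $1$ and $3$, Hodge types $(1,0)$ and $(0,1)$), which we take from Borel--Wallach.
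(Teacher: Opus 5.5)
Your proposal is essentially the paper's argument. The cuspidal Matsushima formula reduces the theorem to finding a cuspidal $\pi$ of $\U(3)$ with $\pi_\infty=J^\pm$. That $\pi$ is taken from Rogawski's packet attached to a one-dimensional character of $\U(1,1)\times\U(1)$ with the right infinity type, and made cuspidal by a supercuspidal component $\pi_p^s$ at an inert prime. The parity in the multiplicity formula is then fixed by choosing $\pi^s$ at one or two inert primes, which is exactly your ``toggle two finite places'' remark. Note, though, that the signs at $\infty$ and at $p$ are not free once you insist on $J^\pm$ and $\pi_p^s$, and that neither the ramification of $\xi$ at $p$ nor the twisting of $\xi$ is needed.

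The paper takes two further precautions. First, it chooses the inert primes odd and takes $\pi_2=\pi_2^n$ when $2$ is nonsplit, so that Rogawski's multiplicity formula is covered by Flicker's result. Second, it passes explicitly from the $\PU(2,1)$-components of $X_{K_f}$ to a congruence lattice in $\SU(2,1)$ via the central isogeny $G^{\mathrm{der}}\to G^{\mathrm{ad}}$: this gives a finite cover, pullback is injective, and cuspidality is preserved.
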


\begin{corollary}
\label{cor:rf}
Let $\Gamma$ be a non-uniform arithmetic lattice in $\SU(2,1)$.
Its inverse
images in the universal cover and in every connected finite cover of
$\SU(2,1)$ are residually finite.
\end{corollary}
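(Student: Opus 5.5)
The corollary will follow from Theorem~\ref{thm:main} together with the author's criterion, Theorem~\ref{thm:hill-criterion}, which is recalled later in the paper; I assume it is available in the form stated in the introduction. Let $\Gamma$ be a non-uniform arithmetic lattice in $\SU(2,1)$. By Theorem~\ref{thm:main} there is a congruence lattice $\Gamma'$ commensurable with $\Gamma$ such that $H^1_{\mathrm{cusp}}(X_{\Gamma'},\C)\ne0$. The theorem also gives $H^1_!(X_{\Gamma'},\C)\ne0$. The reason is that cuspidal harmonic $1$-forms are rapidly decreasing, so they define classes in $H^1_c$. Their images in $H^1$ are non-zero, because the cuspidal cohomology injects into ordinary cohomology; this is the comparison recalled in Section~\ref{sec:automorphic-cohomology}. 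Hence the commensurability class of $\Gamma$ contains a lattice with non-zero first inner cohomology, which is exactly the hypothesis of Theorem~\ref{thm:hill-criterion} with $d=2$.

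Applying Theorem~\ref{thm:hill-criterion} to $\Gamma$, the inverse image $\tilde\Gamma$ of $\Gamma$ in every connected cover of $\SU(2,1)$ is residually finite. This includes the universal cover and every connected finite cover. The universal cover is a genuine infinite cover, since $\pi_1(\SU(2,1))\cong\pi_1(\mathrm{S}(\U(2)\times\U(1)))\cong\Z$.

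One point needs checking: whether the criterion is stated for every connected cover, or only for the universal cover. If only the latter, the finite covers follow directly. A connected finite cover $\tilde G_n$ is the quotient of the universal cover $\tilde G$ by the central subgroup $n\Z\subset\Z=\pi_1$. The inverse image of $\Gamma$ in $\tilde G_n$ is therefore the quotient $\tilde\Gamma/n\Z$, where $\tilde\Gamma$ is the inverse image in $\tilde G$.

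A quotient of a residually finite group by a central subgroup need not be residually finite, so I would argue differently. The group $\tilde\Gamma/n\Z$ is a central extension of $\Gamma$ by the finite group $\Z/n$. Take a non-trivial element $g$. If $g$ maps non-trivially to $\Gamma$, then residual finiteness of the linear group $\Gamma$ separates it. If $g$ lies in the kernel $\Z/n$, then $g$ is the image of some $m\in\Z\setminus n\Z$. Residual finiteness of $\tilde\Gamma$ gives a finite-index normal subgroup $N\subset\tilde\Gamma$ avoiding the finitely many elements $m+kn$ with $0\le k$ less than the index considerations. A cleaner way is to take $N$ with $N\cap\Z\subseteq n\Z$; this exists because $\Z\cap N$ can be shrunk to any finite-index subgroup by intersecting finitely many such $N$. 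Then $N\cdot n\Z/n\Z$ is a finite-index normal subgroup of $\tilde\Gamma/n\Z$ that does not contain $g$.

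The only substantive step is therefore Theorem~\ref{thm:main} itself; the corollary is formal.
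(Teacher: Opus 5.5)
Your main argument is the paper's proof: Theorem~\ref{thm:main} gives a commensurable $\Gamma'$ with $H^1_!(X_{\Gamma'},\C)\ne0$, and Theorem~\ref{thm:hill-criterion} with $d=2$ is stated for \emph{every} connected cover. So the universal cover and all connected finite covers are handled at once, and nothing more is needed. A small point: a rapidly decreasing closed form is not compactly supported, so the fact that its class lies in the image of $H^1_c$ is the content of Borel's Theorem~\ref{thm:borel}, not an immediate consequence of decay. In any case Theorem~\ref{thm:main} already asserts $H^1_!\ne0$.

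The fallback paragraph, which deduces the finite covers from the universal cover, is incorrect and should be removed. Residual finiteness of $\tilde\Gamma$ only tells you that the subgroups $N\cap\Z=d_N\Z$, for $N$ normal of finite index in $\tilde\Gamma$, intersect trivially. That is, the $d_N$ are unbounded. Intersecting $N_1,\dots,N_r$ gives $\operatorname{lcm}(d_{N_1},\dots,d_{N_r})\Z$, and if none of the attainable $d_N$ is divisible by $n$, neither is this lcm. So you cannot conclude that some $N$ has $N\cap\Z\subseteq n\Z$.

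In fact, since $(N\cdot n\Z)\cap\Z=(N\cap\Z)+n\Z$, the existence of such an $N$ is equivalent (given that $\Gamma$ is residually finite) to residual finiteness of $\tilde\Gamma/n\Z$. Equivalently, the extension of $\Gamma$ by $\Z/n\Z$ splits over a finite-index subgroup. So that step assumes exactly what it is meant to prove, and your own caveat that residual finiteness need not pass to central quotients applies to it. The sentence about the elements $m+kn$ does not give a usable argument either.
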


The corresponding result for cocompact arithmetic lattices of the first kind
was proved independently, by different methods, by the author
\cite[Theorems~1 and~3]{HillFractional} and Stover--Toledo
\cite[Theorem~1.1]{StoverToledo}.
For non-uniform lattices, earlier work of Stover--Toledo treated the
Eisenstein Picard commensurability class over $\Q(\sqrt{-3})$
\cite[Theorems~1.1 and~8.2]{StoverToledoSurfaces}.
Corollary~\ref{cor:rf} extends this to all imaginary quadratic fields.

There is also a consequence for fractional-weight modular forms.
Let
$j(\gamma,z)$ be the standard automorphy factor for the action of
$\SU(2,1)$ on the ball.
A multiplier system of weight $1/n$ on a subgroup
$\Delta$ is an automorphy factor
$\ell\colon\Delta\times\mathbb B^2\to\C^\times$ satisfying the cocycle
identity and such that its $n$th power is
$\chi(\gamma)j(\gamma,z)$ for some character
$\chi:\Delta\to\C^\times$ with finite image.
A holomorphic function $f$
satisfying
\[
 f(\gamma z)=\ell(\gamma,z)^m f(z)
\]
and the usual holomorphy condition at the cusps\footnote{For $\SU(2,1)$,
this condition is redundant by Koecher's principle; see
\cite[Section~1.7.4]{deShalitGoren}.
For fractional weight, one applies the integral-weight statement to a
power of $f$ that clears the denominator and kills the finite-order character.}
is then a modular form of
weight $m/n$.

\begin{corollary}
\label{cor:fractional-weights}
Let $\Gamma$ be a non-uniform arithmetic lattice in $\SU(2,1)$.
For every
positive integer $n$, there is a subgroup $\Delta_n\subset\Gamma$ of finite
index that admits a multiplier system of weight $1/n$.
\end{corollary}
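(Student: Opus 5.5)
The plan is to deduce the statement from Corollary~\ref{cor:rf}. Let $\tilde G$ be the universal cover of $\SU(2,1)$ and let $\tilde\Gamma$ be the inverse image of $\Gamma$ in $\tilde G$. The automorphy factor $j(\gamma,z)$ is nowhere vanishing and $\mathbb B^2$ is simply connected, so for each $g\in\SU(2,1)$ the function $j(g,\cdot)$ has a logarithm on $\mathbb B^2$. I will use these logarithms to realise $\tilde G$ concretely: take pairs $(g,\lambda)$ with $\lambda\colon\mathbb B^2\to\C$ holomorphic and $e^{\lambda}=j(g,\cdot)$, with the multiplication forced by the cocycle identity. This group is a connected covering of $\SU(2,1)$. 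Its kernel over $\SU(2,1)$ is $2\pi i\Z$, and since $\pi_1(\SU(2,1))=\pi_1(\mathrm{S}(\U(2)\times\U(1)))\cong\Z$, this should be the universal cover. On it, $\tilde\ell_n((g,\lambda),z)=\exp(\lambda(z)/n)$ is a genuine cocycle with $\tilde\ell_n^{\,n}=j$. The kernel $Z=2\pi i\Z$ of $\tilde\Gamma\to\Gamma$ is central, and $\tilde\ell_n(\zeta,z)=e^{2\pi i k/n}$ for $\zeta=2\pi i k$. So the only obstruction to descending $\tilde\ell_n$ to a subgroup of $\Gamma$ is the character $k\mapsto e^{2\pi ik/n}$ on $Z$.

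The key step uses residual finiteness. By Corollary~\ref{cor:rf}, $\tilde\Gamma$ is residually finite. The element $\zeta_1=2\pi i\in Z$ is nontrivial, and so are $\zeta_1,\dots,\zeta_1^{n-1}$. Choose a finite-index normal subgroup $\tilde\Delta\subset\tilde\Gamma$ avoiding all of them. Then $\tilde\Delta\cap Z\subset n Z$ (written additively, i.e.\ $\subset 2\pi i n\Z$). Let $\Delta_n$ be the image of $\tilde\Delta$ in $\Gamma$. It has finite index, because $\tilde\Delta$ has finite index in $\tilde\Gamma$ and $\tilde\Gamma\to\Gamma$ is surjective. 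Moreover $\Delta_n\cong\tilde\Delta/(\tilde\Delta\cap Z)$.

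Now define $\ell(\gamma,z)=\tilde\ell_n(\tilde\gamma,z)$ for any lift $\tilde\gamma\in\tilde\Delta$ of $\gamma$. Two lifts differ by an element of $\tilde\Delta\cap Z\subset 2\pi i n\Z$, on which $\tilde\ell_n\equiv1$, so $\ell$ is well defined. The cocycle identity for $\ell$ follows from that of $\tilde\ell_n$, and $\ell^n=j$. We may therefore take $\chi$ trivial, which certainly has finite image. This gives a multiplier system of weight $1/n$ on $\Delta_n$.

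The step I expect to need the most care is identifying this logarithm construction with the universal cover to which Corollary~\ref{cor:rf} applies. If the identification is doubtful, there is a fallback: the logarithm group is a connected cover of $\SU(2,1)$ with infinite cyclic kernel, so it is either the universal cover or a finite cover. In either case Corollary~\ref{cor:rf} applies, so the argument goes through without needing the exact identification.
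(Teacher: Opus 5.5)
There is a genuine gap in the residual-finiteness step. Avoiding $\zeta_1,\dots,\zeta_1^{n-1}$ only shows that $\tilde\Delta\cap Z=2\pi i N\Z$ with $N\ge n$. It does not give $n\mid N$. For example, $N=n+1$ is compatible with your choice, and then $\tilde\ell_n(\zeta_1^{\,n+1},z)=e^{2\pi i(n+1)/n}\ne1$, so $\ell$ is not well defined on $\Delta_n$. Already for $n=2$, the case $N=3$ gives the value $-1$. No finite list of central elements to avoid can force $n\mid N$, since $N$ could be any sufficiently large integer prime to $n$. Residual finiteness of $\tilde\Gamma$ only tells you that the orders of the images of $\zeta_1$ in finite quotients are unbounded; it says nothing about their divisibility by $n$. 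What you actually need is a finite-index normal subgroup of $\tilde\Gamma$ whose intersection with $Z$ lies in $nZ$. That is the same as a finite-index normal subgroup of $\tilde\Gamma/nZ$ meeting the central $Z/nZ\cong\Z/n\Z$ trivially, which is a statement about the inverse image of $\Gamma$ in the connected $n$-fold cover.

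The repair is to use the finite-cover half of Corollary~\ref{cor:rf}, which is exactly what the paper does. The group $\tilde\Gamma/nZ$ is residually finite, so intersecting finitely many finite-index normal subgroups that separate its $n-1$ non-trivial central elements from the identity gives one meeting $Z/nZ$ trivially. Its preimage $\tilde\Delta$ in $\tilde\Gamma$ then satisfies $\tilde\Delta\cap Z\subset 2\pi i n\Z$, and your descent of $\tilde\ell_n$ goes through verbatim. With this change your argument coincides with the paper's. The only difference is that you construct the multiplier explicitly as $\exp(\lambda/n)$ on the logarithm group, whereas the paper quotes the passage from a lift to a multiplier system from \cite[Section~2.1]{HillFractional}.

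A smaller point concerns identifying the logarithm group. What needs checking is connectedness, not ``universal versus finite'': a connected cover with kernel $\Z$ is automatically universal. Connectedness holds because on the maximal compact subgroup $\mathrm S(\U(2)\times\U(1))\cong\U(2)$ the function $k\mapsto j(k,0)$ is $\det^{\pm1}$, which induces an isomorphism on $\pi_1$.
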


The level $\Delta_n$ furnished by this argument need not be a congruence
subgroup.
After passing, if necessary, to a further finite-index subgroup,
non-zero fractional-weight modular forms may be constructed from the
multiplier system, for example as Eisenstein series of sufficiently large
weight.
Jalal constructed an explicit half-integral-weight multiplier system on
$\SU(2,1)$ using local cocycles for the double cover and their splittings
over compact open subgroups \cite{Jalal}.
Freitag and the author give explicit examples of weight $1/3$ forms for
certain congruence subgroups of the Eisenstein Picard modular group, together
with a method for determining the levels at which such forms occur
\cite{FreitagHill}.

Deligne's theorem provides a contrast: for $g\geq2$, the inverse image of
$\Sp_{2g}(\Z)$ in the universal cover of $\Sp_{2g}(\R)$ is not residually
finite, nor is its inverse image in any connected finite cover of degree
greater than two \cite{Deligne}.
We discuss this comparison and related restrictions on rational weights in
Section~\ref{sec:residual-finiteness}.

\subsection{Automorphic strategy}

We find the inner-cohomology class asserted in Theorem~\ref{thm:main} among
cuspidal automorphic forms.
The automorphic description of cohomology, reviewed in
Section~\ref{sec:automorphic-cohomology}, turns the problem into a concrete
search for a cuspidal representation
$\pi=\pi_\infty\otimes\pi_f$.
The real factor $\pi_\infty$ must contribute
to cohomology in degree one.
Once such a representation has been found, the
finite factor $\pi_f$ has fixed vectors for some compact open subgroup, and
that subgroup supplies the required congruence level.
Kuga's formula and
Hodge theory identify the resulting classes with harmonic cusp forms
\cite[p.~50, equation~(1.5.1)]{Harris}
\cite[Chapter~II, Proposition~3.1]{BorelWallach}.
Theorem~\ref{thm:borel}
below identifies these classes with a subspace of inner cohomology.

Let $E/\Q$ be the imaginary quadratic extension determining the commensurability
class of $\Gamma$.
Here and below, $\U(3)$ denotes the quasi-split unitary
group in three variables over $\Q$ attached to $E/\Q$; its group of real
points is $\U(2,1)$.
We follow the notation $\U(3)$ used in the cited
automorphic sources, while $\U(2,1)$ records the signature at the real place;
see \cite[p.~904, Section~2.2]{Marshall}.

Theorem~\ref{thm:main} will be proved by constructing a cuspidal automorphic
representation of $\U(3)$ with the required real component.
Rogawski's
classification places the two degree-one cohomological representations in
endoscopic packets attached to the simpler group $H$ described in
Section~\ref{sec:endoscopic-packets}
\cite[p.~906, Section~3.3]{Marshall}.
Endoscopic
transfer does not produce a single representation of $\U(3)$: it produces a
collection, called a packet, of possible local and global representations.
Starting with a suitable automorphic character of $H$, we
will choose from its packet a member having the required cohomological
component at infinity and a supercuspidal component at a finite place; see
\cite[p.~903 and pp.~906--907, Sections~3.3--3.4]{Marshall} and
\cite[Theorem~13.3.2, Section~14, and Proposition~15.2.1]{Rogawski}.

The two local choices serve different purposes.
The real component supplies
degree-one cohomology.
The supercuspidal finite component rules out the
residual spectrum and therefore makes the global representation cuspidal.
The multiplicity formula for the packet lets us make both choices while still
obtaining a representation that occurs automorphically.
This refines the
packet construction in Marshall's study of $L^2$-cohomology growth
\cite[pp.~909--910, Section~4.2]{Marshall}.

\section{Degree-one cohomology of Picard modular surfaces}
\label{sec:automorphic-cohomology}

Let $E/\Q$ be an imaginary quadratic extension.
Up to $\Q$-isomorphism, there
is a unique special unitary group arising from an isotropic Hermitian form of
signature $(2,1)$ in three variables over $E$; it is quasi-split.
Moreover,
commensurability classes of
non-uniform arithmetic lattices in $\SU(2,1)$ are in bijection with imaginary
quadratic fields; see \cite[Section~3]{Stover}.
It is therefore enough to
prove automorphic non-vanishing for the quasi-split group attached to an
arbitrary $E$.

Write $\A$ for the ring of ad\`eles of $\Q$ and $\A_f$ for its finite part.
Let $G=\U(3)$ be this quasi-split unitary group, and choose a maximal compact
subgroup $K_\infty\subset G(\R)$.
Then
\[
 K_\infty\simeq\mathrm U(2)\times\mathrm U(1).
\]
Here the groups on the right are the compact real unitary groups; this
notation is distinct from the global unitary groups used below.
The quotient
$G(\R)/K_\infty$ is the complex two-ball.
Let $Z$ denote the centre of $G$.
Write $G^{\mathrm{ad}}=G/Z$ for its adjoint quotient.
We work throughout with automorphic forms on which $Z(\A)$ acts trivially.
For a compact open subgroup $K_f\subset G(\A_f)$, put
\[
 X_{K_f}=G(\Q)\backslash G(\A)/(K_\infty K_f Z(\A)).
\]
The space $X_{K_f}$ is a
finite disjoint union of quotients of the complex ball by congruence
arithmetic lattices in the adjoint group $G^{\mathrm{ad}}(\R)=\PU(2,1)$.
Choosing $K_f$ sufficiently small makes these lattices torsion-free and the
quotients manifolds.

We next recall how representations of $G(\R)$ detect differential forms on
these quotients.
Write
\[
 \g=\operatorname{Lie}(G(\R))\otimes_\R\C,
 \qquad
\mathfrak k=\operatorname{Lie}(K_\infty)\otimes_\R\C.
\]
For a smooth representation $V$ of $G(\R)$, its relative Lie algebra
cohomology $H^j(\g,K_\infty;V)$ is the cohomology of the complex
\[
 C^j(\g,K_\infty;V)
 =\operatorname{Hom}_{K_\infty}
   \bigl(\textstyle\bigwedge^j(\g/\mathfrak k),V\bigr),
\]
with the standard Lie algebra differential.
This complex is the
representation-theoretic model for differential forms on
$G(\R)/K_\infty$.
In particular, saying that $V$ is cohomological in degree
$j$ means that $H^j(\g,K_\infty;V)$ is non-zero.

Let $\mathcal C_{\mathrm{cusp}}(G,1)$ denote the space of smooth,
$K_\infty$-finite
cuspidal automorphic forms on $G(\A)$ with trivial central character.
Its
$L^2$-completion decomposes discretely into irreducible unitary
representations.
Denote by $\mathcal A_{\mathrm{cusp}}(G,1)$ the set of
representations occurring in this decomposition and by
$m_{\mathrm{cusp}}(\pi)$ the multiplicity of $\pi$.
Every such representation
factors as
$\pi=\pi_\infty\otimes\pi_f$.
Passing to $K_f$-fixed vectors selects the
finite-dimensional space $\pi_f^{K_f}$, while relative Lie algebra
cohomology is applied to $\pi_\infty$.
Decomposing the cuspidal spectrum
therefore gives the cuspidal Matsushima formula
\begin{equation}
\label{eq:matsushima}
 H^j(\g,K_\infty;
       \mathcal C_{\mathrm{cusp}}(G,1)^{K_f})
 \simeq
 \bigoplus_{\pi\in\mathcal A_{\mathrm{cusp}}(G,1)}
 \left(
 H^j(\g,K_\infty;\pi_\infty)\otimes\pi_f^{K_f}
 \right)^{\oplus m_{\mathrm{cusp}}(\pi)}.
\end{equation}
Each summand is unitary, so the Hodge theorem for relative Lie algebra
cohomology identifies its cohomology with its harmonic cochains
\cite[Chapter~II, Proposition~3.1]{BorelWallach}.
Kuga's formula identifies
the corresponding cochain Laplacian with the geometric Laplacian
\cite[p.~50, equation~(1.5.1)]{Harris}.
Thus the left-hand side of
\eqref{eq:matsushima} is precisely the space of harmonic cusp forms that
Harris denotes by $H^j_{\mathrm{cusp}}(X_{K_f},\C)$.

\begin{theorem}[Borel]
\label{thm:borel}
Let $\Lambda$ be a torsion-free arithmetic lattice in $\SU(2,1)$ or
$\PU(2,1)$, and put $Y=\Lambda\backslash\mathbb B^2$.
Sending a harmonic
cuspidal differential form to its de Rham cohomology class defines an
injective map
\[
 H^j_{\mathrm{cusp}}(Y,\C)\longrightarrow H^j(Y,\C)
\]
whose image is contained in the inner cohomology
\[
 H^j_!(Y,\C)
 =\operatorname{im}\bigl(H^j_c(Y,\C)\longrightarrow H^j(Y,\C)\bigr).
\]
\end{theorem}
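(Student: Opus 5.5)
The plan is to prove this theorem, which is Borel's, by the standard analytic route via square-integrability and the decomposition of $L^2$-cohomology. Fix a harmonic cuspidal $j$-form $\omega$ on $Y$. It is closed, since harmonic forms for the complete metric that are square-integrable are closed and coclosed. So it has a de Rham class $[\omega]\in H^j(Y,\C)$. The two claims are injectivity and containment in $H^j_!$. Both rest on one analytic input: a cusp form is rapidly decreasing in the cusps, uniformly together with its derivatives. Concretely, I will use the reduction theory of $Y$. It decomposes as a compact core $Y_0$ together with finitely many cusp neighbourhoods of the form $N\times(t_0,\infty)$, where $N$ is a compact nilmanifold (a Heisenberg quotient). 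On a cusp neighbourhood, the constant term of each coefficient of $\omega$ along the unipotent radical vanishes, and $\omega$ decays faster than any power of the height $t$.

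For containment in inner cohomology, I will show that $\omega$ is cohomologous to a compactly supported form. On each cusp neighbourhood, $\omega$ restricted to $N\times(t_0,\infty)$ is closed. Its class there is determined by its restriction to a slice $N\times\{t\}$. That class lives in $H^j(N,\C)$, which by van Est/Nomizu is the cohomology of the Heisenberg Lie algebra. It can be computed by averaging over $N$, that is, by the constant term of $\omega$ along $N$. Since $\omega$ is cuspidal, this average vanishes. Hence $\omega|_{N\times(t_0,\infty)}=d\eta$ for some form $\eta$, which I will construct explicitly by the homotopy operator integrating in the $t$-direction from $\infty$. The rapid decay makes the integral converge and keeps $\eta$ rapidly decreasing. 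Multiplying $\eta$ by a cutoff $\chi$ equal to $1$ far out in the cusp, the form $\omega-d(\chi\eta)$ is compactly supported and represents $[\omega]$. This gives $[\omega]\in H^j_!(Y,\C)$.

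For injectivity, suppose $\omega=d\beta$ with $\beta$ a smooth form. I will use the pairing with compactly supported cohomology, or equivalently argue in $L^2$. Because $\omega$ is rapidly decreasing, the same cusp homotopy construction lets me replace $\beta$, up to a closed form, by a form that is of moderate growth near the cusps. Indeed, $\beta$ restricted to the cusp differs from the rapidly decreasing primitive $\eta$ by a closed form. By the Nomizu computation, that closed form is cohomologous on the cusp to an $N$-invariant, polynomially bounded form. Then
\[
\|\omega\|^2=\int_Y \omega\wedge *\bar\omega=\int_Y d\beta\wedge*\bar\omega ,
\]
and I integrate by parts over the truncations $Y_{\le T}$. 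The interior term vanishes because $d^*\omega=0$. The boundary term over $N\times\{T\}$ tends to $0$, since $\omega$ decays rapidly while $\beta$ grows at most polynomially. Hence $\omega=0$.

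The main obstacle is the analytic input together with the cusp cohomology step. I need rapid decay of cusp forms with all derivatives (Harish-Chandra; Borel's \emph{Introduction aux groupes arithm\'etiques} and Borel's 1980s work on cuspidal cohomology). I also need to check carefully that the constant-term vanishing really kills the class on $N$ in every degree, including degrees where $H^j(\mathfrak n)$ involves non-invariant directions. I would handle the second point through Kostant/van Est: represent cusp cohomology by $N$-invariant forms, and note that projecting $\omega$ onto $N$-invariants is exactly taking its constant term. Failing that, I would cite Borel's theorem directly, as the attribution suggests.
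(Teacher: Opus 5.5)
Your proposal is correct and is essentially the argument the paper delegates to Borel. The paper's proof only cites Borel's theorem that rapidly decreasing harmonic forms inject into $H^j(Y,\C)$ with image in $H^j_!(Y,\C)$, applied to cusp forms via their rapid decay, and your rapidly decreasing primitive on each cusp plus integration by parts is a rank-one reconstruction of that result. Two simplifications: your homotopy integral from $\infty$ already gives $d\eta=\omega$ on each cusp from rapid decay alone (the slice restrictions of $\omega$ tend to $0$), so the Nomizu/constant-term check you flag is unnecessary; and for injectivity you can skip renormalising $\beta$ by applying your first step to the closed, rapidly decreasing form $*\bar\omega$, writing it as a compactly supported form plus $d$ of a rapidly decreasing one, whence $\int_Y d\beta\wedge *\bar\omega=0$ by Stokes.
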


\begin{proof}
The assertion for rapidly decreasing harmonic forms is proved in
\cite[Sections~5.3 and~5.5]{BorelStableII}, where Section~5.5 applies the
result to harmonic cusp forms.
The underlying analytic injectivity argument
also appears in \cite[Proposition~2.5 and Remark~3.7]{BorelStableI}.
See
\cite[p.~51, Theorem~1.5.2]{Harris} for the formulation used here.
\end{proof}

Applying Theorem~\ref{thm:borel} componentwise shows that the canonical map
from $H^j_{\mathrm{cusp}}(X_{K_f},\C)$ to ordinary cohomology is injective
and has image in inner cohomology.

The formula~\eqref{eq:matsushima} gives a convenient test for non-vanishing.
It
is enough to find a cuspidal representation that occurs with positive
multiplicity and whose real component has non-zero degree-one cohomology.
Indeed, the finite component of any such representation is smooth and hence
has a non-zero vector fixed by some compact open subgroup.

Following \cite[p.~906, Section~3.3]{Marshall}, let $J^+$ and $J^-$ denote
the two irreducible unitary representations of $G(\R)$ with trivial central
character and non-zero relative Lie algebra cohomology in degree one.
They
are non-tempered and are distinguished by the Hodge types $(1,0)$ and
$(0,1)$, respectively.
Equivalently, they are the non-tempered members of
the real endoscopic packets $\{J^+,D^-\}$ and $\{J^-,D^+\}$, whose other
members are discrete series.
Moreover,
\[
 \dim H^1(\g,K_\infty;J^+)
 =\dim H^1(\g,K_\infty;J^-)=1.
\]
This is the degree-one part of Rogawski's classification
\cite[Proposition~15.2.1]{Rogawski}.
We will construct a cuspidal $\pi$
with $\pi_\infty=J^+$.

\section{Endoscopic packets}
\label{sec:endoscopic-packets}

\subsection{The endoscopic group and transfer}

Every one-dimensional unitary automorphic representation of the endoscopic
group $H=\U(2)\times\U(1)$ gives rise to a packet for $G=\U(3)$.
Here $\U(2)$ is the quasi-split unitary group of a hyperbolic Hermitian plane
over $E/\Q$ (thus $\U(2)(\R)\simeq\U(1,1)$), while $\U(1)$ is the norm-one
torus associated with $E/\Q$.
We use Rogawski's classification in the range in which the multiplicity
formula is justified by Flicker's work.
The qualification at $2$
and its role in the proof are made explicit below; see
\cite[p.~904, Remark]{Marshall}.
Formally, the transfer is specified by an embedding of $L$-groups
${}^LH\to{}^LG$.
The group $H$, the chosen embedding, and the two elementary maps
needed below are described in
\cite[p.~905, Section~2.4]{Marshall}.
Explicitly,
\[
 \det\nolimits_0:H\longrightarrow\U(1),\qquad
 \lambda:H\longrightarrow\U(1)
\]
are the determinant on the $\U(2)$ factor and projection onto the $\U(1)$
factor, respectively.

\subsection{Characters on the endoscopic group}

The $L$-group embedding requires an auxiliary Hecke character.
Fix a Hecke
character $\mu$ of $E^\times\backslash\A_E^\times$ whose restriction to
$\Q^\times\backslash\A^\times$ is the quadratic character associated with
$E/\Q$, as in \cite[p.~904, Section~2.1]{Marshall}.
When $\mu$ is evaluated on a norm-one id\`ele, we use the same symbol for its
restriction.
To describe the automorphic characters of $H$ that we will
transfer, define the groups
\[
 E^1=\{x\in E^\times:N_{E/\Q}(x)=1\},
 \qquad
 \A_E^1=\{x\in\A_E^\times:N_{E/\Q}(x)=1\}.
\]
These are the rational and adelic points of the norm-one torus
\[
 \U(1)=\ker\bigl(\operatorname{Res}_{E/\Q}\mathbb G_m
                 \xrightarrow{N_{E/\Q}}\mathbb G_m\bigr),
\]
and
\[
 I_E^1=E^1\backslash\A_E^1
\]
is its id\`ele class group.

Let $\theta:I_E^1\to\C^\times$ be a unitary character.
It determines the
one-dimensional automorphic representation
\begin{equation}
\label{eq:xi}
 \xi_\theta=(\theta\circ\det\nolimits_0)\otimes
       \bigl((\theta^{-2}\mu^{-1})\circ\lambda\bigr)
\end{equation}
of $H(\A)$; this is the parametrization in
\cite[p.~908, immediately after (6)]{Marshall}.
The factor $\theta^{-2}\mu^{-1}$ ensures that the restriction of
$\xi_\theta$ to the diagonally embedded centre is $\mu^{-1}$.
Our normalization of transfer multiplies this by $\mu$, so the transferred
packet has trivial central character.
To obtain the required cohomological member of the real packet, we prescribe
the infinity type of $\theta$ as follows.
Let $t\in\Z$ be defined by
$\mu_\infty(z)=(z/\overline z)^{t+1/2}$ for $z\in\C^\times$, where the
right-hand side means $(z/|z|)^{2t+1}$.
Choose
$\theta_\infty(z)=z^{-t-1}$ on $\U(1)(\R)$.

We must check that this prescribed $\theta_\infty$ is the infinity component
of a global character of $I_E^1$.
Writing $\theta=\theta_\infty\theta_f$, the condition that $\theta$ be
trivial on the diagonally embedded group $E^1$ is
\[
 \theta_f(a)=\theta_\infty(a)^{-1}\qquad(a\in E^1).
\]
Thus the finite component must cancel the infinity component on rational
points; it cannot in general be chosen unramified everywhere.

To see that such a choice is possible, take a compact open subgroup
$K\subset\A_{E,f}^1$ contained in the finite integral points of the
norm-one torus.
Then $E^1\cap K$ consists of norm-one units of $E$, which are roots of
unity because $E$ is imaginary quadratic.
There are only finitely many of these, so we can shrink $K$ until
$E^1\cap K=\{1\}$.
Consequently, prescribing $\theta_\infty$ on the real factor and the
trivial character on $K$ defines a character on
$E^1(\U(1)(\R)K)$ that is trivial on $E^1$.
The quotient $\A_E^1/E^1(\U(1)(\R)K)$ is finite, and a unitary character
of a subgroup of finite index in an abelian group extends to the whole
group.
This gives the required global $\theta$ with finite component trivial on
$K$.
Shrinking $K$ amounts to allowing a larger finite conductor.
Marshall uses this freedom to count characters with the prescribed
infinity type in his sets $\Theta(\mathfrak n)$
\cite[p.~909, Section~4.2]{Marshall}; here we need only one such character.

\subsection{Local and global packets}

For each place $v$ of $\Q$, write $\theta_v$, $\mu_v$, and
$\xi_{\theta,v}$ for the corresponding local components.
Endoscopic transfer
associates to $\xi_{\theta,v}$ a finite local packet
$\Pi_v(\xi_{\theta,v})$ of irreducible representations of $G(\Q_v)$.
The associated global packet is the set of restricted tensor products
\[
 \Pi(\xi_\theta)
 =\left\{\bigotimes'_v\pi_v:
        \begin{array}{l}
        \pi_v\in\Pi_v(\xi_{\theta,v})\text{ for every }v,\\
        \pi_v\text{ is unramified at all but finitely many finite places}
        \end{array}\right\}.
\]
This is the packet construction of
\cite[pp.~906--907, Sections~3.1--3.4]{Marshall};
see also \cite[Theorem~13.3.2 and Section~14]{Rogawski}.

We now describe the local packets that enter the proof.

\smallskip
\noindent\emph{The real place.}
For the character $\theta$ chosen above, substitution in
\eqref{eq:xi} gives
$\xi_{\theta,\infty}=(\det\nolimits_0)^{-t-1}\lambda$, and the archimedean packet
calculation in \cite[p.~907, Section~3.3]{Marshall} then gives
\begin{equation}
\label{eq:infinity-packet}
 \Pi_\infty(\xi_{\theta,\infty})=\{J^+,D^-\},
\end{equation}
where $J^+$ is the degree-one cohomological representation introduced above
and $D^-$ is a discrete-series representation.
In Marshall's notation,
$J^+$ is the non-tempered member $\pi_\infty^n$ of this packet, whereas
$D^-$ is the tempered member $\pi_\infty^s$.

\smallskip
\noindent\emph{Split finite places.}
If $v$ splits in $E/\Q$, the local packet is the singleton
\[
 \Pi_v(\xi_{\theta,v})=\{\pi_v^n(\xi_{\theta,v})\},
\]
whose sole member is the representation induced from the corresponding
character of the Levi subgroup $H(\Q_v)$; see
\cite[p.~906, Section~3.1]{Marshall}.
When $\theta_v$ and $\mu_v$ are
unramified, this representation is unramified.

\smallskip
\noindent\emph{Nonsplit finite places.}
Suppose that $v$ is a nonsplit finite place of $E/\Q$.
The local packet has
two members:
\begin{equation}
\label{eq:local-packet}
 \Pi_v(\xi_{\theta,v})
 =\{\pi_v^n(\xi_{\theta,v}),\pi_v^s(\xi_{\theta,v})\}.
\end{equation}
The superscripts distinguish the non-tempered member $\pi_v^n$ from the
supercuspidal member $\pi_v^s$; see
\cite[p.~906, Section~3.2]{Marshall}.
When all the local data---in
particular, $E/\Q$, $\theta_v$, and $\mu_v$---are unramified, $\pi_v^n$ is
the unramified (equivalently, spherical) member, whereas $\pi_v^s$ is
supercuspidal and hence ramified.

\subsection{The automorphic multiplicity}

Rogawski's multiplicity formula determines which members of the packet
occur in the discrete automorphic spectrum.
If the representation
$\pi=\bigotimes'_v\pi_v$ belongs to $\Pi(\xi_\theta)$, let the integer
$n(\pi)$ be the number of places at which the local packet has two members
and $\pi_v=\pi_v^s(\xi_{\theta,v})$.
This number is finite since the supercuspidal representations $\pi_v^s$ are ramified.
Let
$m_{\mathrm{disc}}(\pi)$ denote the multiplicity of $\pi$ in the discrete
automorphic spectrum.
Suppose, in addition, that $\pi_2=\pi_2^n(\xi_{\theta,2})$ if $2$ is
nonsplit in $E/\Q$.
Thus $\pi_2$ is a constituent of a parabolically induced representation.
This is
precisely the case in which Flicker's result
permits one to use Rogawski's multiplicity formula despite the general
qualification at $2$; see \cite[p.~904, Remark]{Marshall}.
Under this hypothesis the formula is
\begin{equation}
\label{eq:multiplicity}
 m_{\mathrm{disc}}(\pi)=\frac12\left(1+\epsilon(\xi_\theta,\mu) \cdot
                    (-1)^{n(\pi)}\right),
\end{equation}
where the global packet sign $\epsilon(\xi_\theta,\mu)$ is either $1$ or
$-1$.
Among packet members satisfying the stated hypothesis at $2$, precisely
those with $(-1)^{n(\pi)}=\epsilon(\xi_\theta,\mu)$ occur in the discrete
automorphic spectrum, each with multiplicity one; see
\cite[p.~907, Section~3.4]{Marshall}, \cite[p.~218]{Flicker}, and
\cite{RogawskiMultiplicity}.

\section{A cuspidal packet member}

The following construction refines Marshall's lower-bound argument by
requiring his set $I$ of supercuspidal choices to be nonempty
\cite[p.~909, Section~4.2]{Marshall}.

\begin{proposition}
\label{prop:cusp}
For every imaginary quadratic field $E$, the quasi-split group $G=\U(3)$
attached to $E/\Q$ has a cuspidal automorphic representation $\pi$ with
trivial central character and $\pi_\infty=J^+$.
\end{proposition}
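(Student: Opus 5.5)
We take the character $\theta$ of $I_E^1$ constructed in Section~\ref{sec:endoscopic-packets}, with $\theta_\infty(z)=z^{-t-1}$, and form the one-dimensional representation $\xi_\theta$ of $H(\A)$ from \eqref{eq:xi}. By construction its global packet $\Pi(\xi_\theta)$ has trivial central character, and by \eqref{eq:infinity-packet} its real packet is $\{J^+,D^-\}$. We will build a member $\pi=\bigotimes'_v\pi_v$ of $\Pi(\xi_\theta)$ by making three kinds of local choice:
\begin{itemize}
\item $\pi_\infty=J^+$;
\item at one auxiliary nonsplit finite place $p\neq2$, the supercuspidal member $\pi_p=\pi_p^s(\xi_{\theta,p})$;
\item at every other finite place, the non-tempered member $\pi_v^n(\xi_{\theta,v})$, which is the unique member at split places.
\end{itemize}
In particular, if $2$ is nonsplit we take $\pi_2=\pi_2^n$, which is exactly the hypothesis under which \eqref{eq:multiplicity} applies. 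Almost all $v$ have $\theta_v,\mu_v$ and $E_v/\Q_v$ unramified, so $\pi_v^n$ is unramified for almost all $v$ and $\pi$ is a legitimate element of $\Pi(\xi_\theta)$.

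\textbf{Forcing automorphy.} Only the archimedean and nonsplit two-member packets contribute to $n(\pi)$. The real packet also has two members, and the choice there must be counted in the sign as in Marshall's formula; we absorb it by treating the real choice as fixed and tracking its contribution inside $\epsilon(\xi_\theta,\mu)$, as Marshall does. The parity of $n(\pi)$ is then controlled by how many nonsplit finite places we assign $\pi_v^s$. There are infinitely many nonsplit odd primes, so we may choose the set $I$ of supercuspidal places to be either one prime $p$ or two primes $p,q$, both odd and nonsplit. Exactly one of these two parities satisfies $(-1)^{n(\pi)}=\epsilon(\xi_\theta,\mu)$, and \eqref{eq:multiplicity} then gives $m_{\mathrm{disc}}(\pi)=1$. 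This is where we refine Marshall's argument, by insisting that $I\neq\emptyset$. Note that $\epsilon(\xi_\theta,\mu)$ depends only on $\xi_\theta$ and $\mu$, not on our local choices, so the argument does not require computing it.

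\textbf{Cuspidality.} This is the main obstacle. The residual discrete spectrum of $\U(3)$ comes from residues of Eisenstein series induced from the Borel subgroup, which is the only proper parabolic up to conjugacy. Every local component of a residual representation is therefore a constituent of a parabolically induced representation, and so cannot be supercuspidal. Since $\pi_p$ is supercuspidal, $\pi$ is not residual, so $\pi$ lies in the cuspidal spectrum with $m_{\mathrm{cusp}}(\pi)=1$. The central character of $\pi$ is trivial by the normalization of $\xi_\theta$, so $\pi\in\mathcal A_{\mathrm{cusp}}(G,1)$ with $\pi_\infty=J^+$, as required.

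The step that needs the most care is checking that Rogawski's sign convention really allows an even and an odd number of supercuspidal finite places with $\pi_\infty=J^+$ held fixed, that is, that $J^+$ is counted consistently in $n(\pi)$. Since changing $|I|$ by one flips the parity, the argument works whichever convention is used.
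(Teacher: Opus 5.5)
Your proposal is correct and is essentially the paper's proof: fix $\pi_\infty=J^+$, put supercuspidal members at one or two odd inert primes so that the parity makes $m_{\mathrm{disc}}(\pi)=1$ in \eqref{eq:multiplicity}, take $\pi_v^n$ everywhere else (including at $2$), and deduce cuspidality because every local component of a residual representation is a subquotient of a parabolically induced representation. Your hedging about the real place is harmless but unnecessary, since $J^+$ is the non-tempered member $\pi_\infty^n$ and so does not contribute to $n(\pi)$. You should also reconcile your bullet list, which fixes a single supercuspidal place, with the next paragraph, which allows two.
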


\begin{proof}
Choose $\theta$ as above and put $\xi=\xi_\theta$.
By \eqref{eq:infinity-packet}, we may take $\pi_\infty=J^+$.
We choose the finite components to obtain multiplicity one and at least one
supercuspidal component.

There are infinitely many odd primes that are inert in $E/\Q$.
At any such prime $v$, the two-member packet \eqref{eq:local-packet} is available.
Choose the supercuspidal representation $\pi_v^s(\xi_v)$ at one inert prime.
If a single supercuspidal choice gives the wrong parity in
\eqref{eq:multiplicity}, make a second supercuspidal choice at another inert odd
prime.
Choose the
non-tempered member $\pi_v^n(\xi_v)$ at every other nonsplit finite place, in
particular at the prime $v=2$.
At split places there is no choice.
At almost every place all the data
are unramified and we have chosen the unramified member, so these local
choices form a well-defined restricted tensor product $\pi$.
The set of supercuspidal choices is finite
and nonempty, and its cardinality has the parity for which the
multiplicity formula gives $m_{\mathrm{disc}}(\pi)=1$.
Hence the resulting packet member $\pi$ occurs in the discrete automorphic spectrum
with multiplicity one.

Since $\pi$ has a supercuspidal finite component, it is cuspidal
\cite[p.~219]{Flicker}.
Indeed, every local component of a residual representation is a subquotient
of a proper parabolic induction, whereas a supercuspidal representation is,
by definition, not such a subquotient.
\end{proof}

\begin{remark}
Proposition~\ref{prop:cusp} remains true with $J^-$ in place of $J^+$.
Indeed, the argument may be repeated using the other archimedean packet
$\{J^-,D^+\}$.
\end{remark}

\begin{proof}[Proof of Theorem~\ref{thm:main}]
Fix the imaginary quadratic field $E$ determining the commensurability class
of $\Gamma$, and let the representation $\pi$ be supplied by
Proposition~\ref{prop:cusp}.
Since
$\pi_f$ is smooth, there is a compact open subgroup $K_f\subset G(\A_f)$ for
which $\pi_f^{K_f}\ne0$.
After shrinking $K_f$, we may assume that the
associated arithmetic groups are torsion-free; the chosen vector remains fixed when
the level is made smaller.
Since $\pi$ is cuspidal with multiplicity one, $\pi_\infty=J^+$, and
$\pi_f^{K_f}\ne0$, its degree-one summand in \eqref{eq:matsushima} is non-zero.
Thus
\[
 H^1_{\mathrm{cusp}}(X_{K_f},\C)\ne0.
\]
The adelic locally symmetric space $X_{K_f}$ is a finite disjoint union of
arithmetic ball quotients.
Both cuspidal and inner cohomology decompose as
direct sums over the connected components.
In particular, some component
$X_{\overline\Lambda}=\overline\Lambda\backslash\mathbb B^2$, with
$\overline\Lambda\subset G^{\mathrm{ad}}(\Q)$ a congruence arithmetic
lattice, satisfies
\[
 H^1_{\mathrm{cusp}}(X_{\overline\Lambda},\C)\ne0.
\]
Let $\overline K_f$ be the image of $K_f$ in
$G^{\mathrm{ad}}(\A_f)$, and choose an adelic representative
$\overline g_f$ of this component.
Thus $\overline\Lambda$ is defined by
the conjugated finite level
$\overline g_f\overline K_f\overline g_f^{-1}$.

Let $G^{\mathrm{der}}$ be the derived subgroup of $G$, and consider the
central isogeny
\[
 q:G^{\mathrm{der}}\longrightarrow G^{\mathrm{ad}},
 \qquad
 q_{\R}:\SU(2,1)\longrightarrow\PU(2,1).
\]
Choose a compact open subgroup $K_f^{\mathrm{der}}$ of
$G^{\mathrm{der}}(\A_f)$ whose image under $q$ lies in
$\overline g_f\overline K_f\overline g_f^{-1}$.
After shrinking
$K_f^{\mathrm{der}}$ if necessary, set
\[
 \Gamma'=G^{\mathrm{der}}(\Q)\cap K_f^{\mathrm{der}}.
\]
Then $\Gamma'$ is a torsion-free congruence arithmetic lattice and
$q(\Gamma')\subset\overline\Lambda$.
The image of an arithmetic subgroup
under an isogeny is arithmetic \cite[Section~8.9]{BorelArithmetic}; hence
$q(\Gamma')$ and $\overline\Lambda$ are lattices in the same real group, so
the inclusion has finite index.
Consequently, the induced map
\[
 p:X_{\Gamma'}\longrightarrow X_{\overline\Lambda}
\]
is a finite covering.
Its pullback on ordinary cohomology is injective by
the transfer map.
Pullback also preserves cuspidality.
Indeed, the central
isogeny identifies the unipotent radicals of corresponding rational
parabolic subgroups, so formation of constant terms commutes with pullback.
Consequently
\[
 H^1_{\mathrm{cusp}}(X_{\Gamma'},\C)\ne0.
\]
Applying Theorem~\ref{thm:borel} gives $H^1_!(X_{\Gamma'},\C)\ne0$.
Finally, the classification of
non-uniform arithmetic lattices recalled above shows that $\Gamma'$ is
commensurable with the original lattice $\Gamma$.
\end{proof}

\begin{remark}
The proof of Theorem~\ref{thm:main} produces inner cohomology at some
congruence level, but does not attempt to determine the smallest such level.
It also explains why the construction lies naturally in the endoscopic part
of the automorphic spectrum: the two
degree-one cohomological representations of $\U(2,1)$ are non-tempered
\cite[p.~906, Section~3.3]{Marshall}.
\end{remark}

\section{Residual finiteness}
\label{sec:residual-finiteness}

We recall the cohomological criterion that converts the non-vanishing in
Theorem~\ref{thm:main} into residual finiteness.

\begin{theorem}[{\cite[Theorem~2]{Hill}}]
\label{thm:hill-criterion}
Let $d\geq2$, and let $\Gamma$ be a non-uniform arithmetic lattice in
$\SU(d,1)$.
Suppose that there is an arithmetic lattice $\Gamma'$ commensurable with
$\Gamma$ such that
\[
 H^1_!(\Gamma'\backslash\mathbb B^d,\C)\ne0,
\]
where $\mathbb B^d$ is the complex hyperbolic $d$-ball.
Then the inverse image of $\Gamma$ in every connected cover of $\SU(d,1)$
is residually finite.
\end{theorem}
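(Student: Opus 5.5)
This criterion is quoted from \cite[Theorem~2]{Hill}, but here is how I would prove it. The strategy is to show that every finite central extension of $\Gamma$ pulled back from a connected cover of $\SU(d,1)$ is residually finite. The inverse image of $\Gamma$ in the universal cover is a central extension of $\Gamma$ by $\pi_1(\SU(d,1))\cong\Z$. Residual finiteness of it, and of its quotients by the finite-index subgroups $n\Z$, reduces to showing that the class $e\in H^2(\Gamma,\Z)$ of this extension becomes divisible by arbitrary $n$ after restriction to suitable finite-index subgroups. More precisely, the criterion I would use is the following. A central extension $1\to\Z\to\tilde\Gamma\to\Gamma\to1$ with $\Gamma$ residually finite is residually finite provided that, for every $n$, there is a finite-index $\Delta\subset\Gamma$ such that the restriction of $e$ to $\Delta$ lies in $nH^2(\Delta,\Z)+(\text{torsion})$. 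In that case the pulled-back extension of $\Delta$ by $\Z/n$ splits up to a finite character, which gives a finite quotient of $\tilde\Gamma$ detecting $\Z/n$.

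The first step is to identify $e$ rationally. It is the Euler class of the canonical line bundle, namely the first Chern class of the automorphy factor $j(\gamma,z)$, so $e\otimes\C$ is represented by a multiple of the invariant Kähler form $\omega$ on $\mathbb B^d$. For non-uniform $\Gamma$ the space $\Gamma\backslash\mathbb B^d$ is non-compact, and the point is to decide when $[\omega]$ is ``divisible''. If $[\omega]$ vanished in $H^2(\Gamma,\Q)$ we would be done at once, but it does not, because $\int\omega^d\ne0$ on the Baily--Borel compactification. Instead I would use the non-vanishing of $H^1_!$ as follows. By passing to a common finite-index subgroup we may assume $\Gamma'\subset\Gamma$. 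Then $H^1_!(\Gamma')\ne0$ gives, via Hodge theory on the toroidal compactification, a class $\alpha$ of type $(1,0)$ whose restriction to each cusp is trivial. Cup product with $\bar\alpha$, or taking norms down finite towers, produces a non-trivial class in $H^2$ that pairs with $\omega$.

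The second step, which is the heart of the argument, is to produce finite-index subgroups $\Delta_n$ on which the restriction of $e$ becomes divisible by $n$. I would try the following. Take the abelian cover determined by $\alpha$, namely the finite quotient $\Gamma'\to H^1_!(\Gamma',\Z)^{\vee}\to(\Z/N)^r$. Inner classes are trivial on cusp subgroups, so the cusp stabilisers lift unchanged. The covering degree then grows while the cusp contribution to the obstruction, which lies in $H^2$ of the boundary (nilmanifold) where $e$ restricts to a torsion-related class controlled by the Heisenberg centre, stays bounded. Comparing $e$ with the pushforward and pullback along the transfer, the restriction of $e$ to the cover should equal $N$ times a class, modulo the image of boundary classes. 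The main obstacle is exactly this boundary control: for non-uniform lattices $H^2(\Gamma,\Z)\to H^2(\partial)$ is non-zero, and the Euler class genuinely restricts non-trivially to the cusps. The role of the hypothesis $H^1_!\ne0$, as opposed to merely $H^1\ne0$, is to guarantee covers that are unramified along the cusps in the relevant sense. I would handle this by working in compactly supported cohomology, using the exact sequence $H^1(\partial)\to H^2_c\to H^2\to H^2(\partial)$, and checking that the Heisenberg central extension at each cusp already lifts to finite covers of every degree by residual finiteness of nilpotent groups.

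Finally, having residual finiteness for the universal cover, the statement for every connected finite cover follows. These covers are quotients of the universal-cover extension by $k\Z$, and they are handled by the same divisibility argument applied with $n$ a multiple of $k$.
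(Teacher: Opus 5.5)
The paper does not prove this criterion; it only quotes it from \cite{Hill}, so I am judging your sketch on its own.

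\textbf{First step.} Your reduction is correct. Since $\Gamma$ is residually finite, the quotient of the extension by $k\Z$ is residually finite once the $\Z/k$-extension splits over some finite-index $\Delta$. That means $e|_\Delta\in kH^2(\Delta,\Z)$ up to a torsion class, and the torsion class dies on the kernel of a finite character.

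\textbf{The gap.} The second step is the entire content of the theorem, and nothing you write actually forces $e$ to become divisible. The transfer identity $\operatorname{cor}\circ\operatorname{res}=[\Gamma':\Delta]$ only says that a multiple of $e$ is a corestriction. It gives no divisibility of $\operatorname{res}(e)$.

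Nor can ``the degree grows while the cusp contribution stays bounded'' be the mechanism. For $d\geq2$, $H^2$ is not of rank one. Deligne's theorem for $\Sp_{2g}(\Z)$ is exactly a case with finite covers of every degree but no divisibility beyond~$2$.

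What does become divisible on your abelian cover $\Delta_N=\ker(\Gamma'\to(\Z/N)^r)$ are cup products of the defining classes. If $a,b\in H^1(\Gamma',\Z)$ are among them, then $a|_{\Delta_N}=Na'$ and $b|_{\Delta_N}=Nb'$, so $(a\cup b)|_{\Delta_N}=N^2\,a'\cup b'$. Classes outside the span of such products have no reason to become divisible. At a fixed level there is no reason for $e$ to lie in that span, since the few products available can have components along other Hecke eigenspaces.

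So the missing statement is this: on some finite-index subgroup, $e\otimes\Q$ is a $\Q$-linear combination of cup products of classes in $H^1$. Clearing denominators and passing to $\Delta_{MN}$ then gives $e\in NH^2+(\text{torsion})$.

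\textbf{Where $H^1_!\neq0$ enters.} This is where the hypothesis must really be used. You only gesture at it (``produces a class that pairs with $\omega$'') without drawing a conclusion. One way to supply it:
\begin{itemize}
\item $H^1_!$ carries a pure Hodge structure of weight one, so it contains a non-zero $(1,0)$-class $\alpha$.
\item Then $\alpha\cup\bar\alpha$ is inner, and $\int i\,\alpha\wedge\bar\alpha\wedge\omega^{d-1}>0$. So $\alpha\cup\bar\alpha$ has a non-zero component along $\omega$.
\item In the direct limit over all levels, the span of cup products is stable under the commensurator. The invariant part of the limit of inner $H^2$ is the line $\C\omega$.
\item Projecting onto that part by Hecke averaging puts $\omega$ itself in the span.
\end{itemize}

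\textbf{The cusps.} Your treatment of the boundary misplaces the difficulty. Near a cusp, $\omega$ is, up to a constant, $dd^c\log\rho$ for a $\Gamma_P$-invariant height function $\rho$. So $e$ restricts to a torsion class on every boundary nilmanifold. Rationally $e$ is already inner, and the torsion is harmless.

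The inner hypothesis is therefore not needed to make covers unramified at the cusps. It is needed so that the pairing with $\omega^{d-1}$ is defined (one factor compactly supported) and positive (purity of weight one).
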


\begin{proof}[Proof of Corollary~\ref{cor:rf}]
Theorem~\ref{thm:main} supplies an arithmetic subgroup
$\Gamma'$ commensurable with $\Gamma$ for which
$H^1_!(X_{\Gamma'},\C)\ne0$.
Theorem~\ref{thm:hill-criterion}
applies with $d=2$ and gives residual finiteness
of the inverse image of $\Gamma$ in every connected cover of $\SU(2,1)$.
\end{proof}

\begin{proof}[Proof of Corollary~\ref{cor:fractional-weights}]
A splitting of the relevant central
extension over $\Delta$---equivalently, a lift of $\Delta$ to the connected
$n$-fold cover of $\SU(2,1)$---produces a multiplier system of weight $1/n$
with $\chi$ trivial \cite[Section~2.1]{HillFractional}.
Conversely, since
$\chi$ has finite image, restriction to its finite-index kernel gives a
multiplier system with trivial character, and the corresponding central
extension splits over that subgroup.

Fix $n\ge1$, and let $\widetilde\Gamma^{(n)}$ be the inverse image of
$\Gamma$ in the connected $n$-fold cover.
Its central kernel is the finite
group $\Z/n\Z$.
By Corollary~\ref{cor:rf},
$\widetilde\Gamma^{(n)}$ is residually finite.
We may therefore choose a
finite-index normal subgroup $K$ of $\widetilde\Gamma^{(n)}$ which meets this
central kernel trivially: take the intersection of the kernels of finitely
many finite quotients separating its non-identity elements.
The projection
to $\Gamma$ identifies $K$ with a finite-index subgroup
$\Delta_n\subset\Gamma$, and its inverse gives a lift of $\Delta_n$ to the
$n$-fold cover.

The lift just constructed therefore supplies a multiplier system of weight
$1/n$ on $\Delta_n$, with trivial character $\chi$
\cite[Section~2.1]{HillFractional}.
\end{proof}

There is a useful contrast with arithmetic groups having finite congruence
kernel.
The author proved that if a simple, algebraically simply connected group
over $\Q$ has positive real rank and finite congruence kernel, then every
one of its arithmetic subgroups admits an extension by a finite abelian
group which is not residually finite
\cite[Theorem~1]{HillNonResidual}.
For the canonical extensions arising from
connected covering groups, however, an earlier theorem of Deligne \cite{Deligne} gives a
sharper conclusion in the basic symplectic example.
If $g\geq2$, every
finite-index subgroup of the inverse image of $\Sp_{2g}(\Z)$ in the universal
cover of $\Sp_{2g}(\R)$ contains twice the fundamental group
$2\cdot \pi_1(\Sp_{2g}(\R))$.
Hence this inverse image is not
residually finite; the same is true in every connected finite cover of degree
greater than two.

By contrast, the inverse image of $\Sp_{2g}(\Z)$ in the
metaplectic double cover is residually finite.
Indeed, classical Siegel theta
series give non-zero Siegel modular forms of half-integral weight on a
finite-index congruence subgroup of $\Sp_{2g}(\Z)$.
The square of the theta multiplier differs from the standard Siegel
automorphy factor by a finite-order character.
On the kernel of that character, the multiplier therefore defines a lift
to the metaplectic double cover.
This lift is isomorphic to a finite-index subgroup of $\Sp_{2g}(\Z)$,
so it is finitely generated, linear, and residually finite.
It has finite index in the full inverse image of $\Sp_{2g}(\Z)$.
Residual finiteness is preserved under finite extensions of finitely
generated groups, so the full inverse image is residually finite.
Deligne's theorem
shows that this degree-two exception is best possible \cite{Deligne}.

Related restrictions on rational weights hold for orthogonal groups.
For $\ell\geq3$, the author used Deligne's argument and Kneser's
theorem for spin groups to prove that every non-zero modular form of rational
weight on $\operatorname{SO}^{+}(2,\ell)$ with respect to an arithmetic
subgroup has half-integral weight
\cite[Theorem~7]{HillOrthogonal}.
Applied to Borcherds products, this weight
restriction yields congruences between vector-valued Eisenstein series and
cusp forms of half-integral weight \cite[Theorem~2]{HillOrthogonal}.
The groups considered in the current paper lie outside that finite-congruence-kernel setting,
and Corollary~\ref{cor:rf} shows that the corresponding central extensions are
residually finite.
Corollary~\ref{cor:fractional-weights} permits arbitrary denominators after passage to suitable finite-index
subgroups.

Multiplier systems of nonintegral weight also enter the Selberg trace
formulae of Ayaz and Intissar for Maass Laplacians on compact complex
hyperbolic quotients \cite[Section~3]{AyazIntissar}.
Although their analytic formulas involve a real weight parameter, their
application to a lattice quotient requires an appropriate multiplier system.
For a fixed arithmetic lattice in $\SU(2,1)$, the argument of
Freitag--Hill \cite[Theorem~4 and Proposition~2]{FreitagHill} implies that
the possible weights are rational with a common bounded denominator.
Thus the arbitrary denominators in
Corollary~\ref{cor:fractional-weights} are obtained by allowing the
finite-index subgroup to vary.

\section*{Acknowledgements}

This work was supported in part by access to ChatGPT provided through
OpenAI's ChatGPT for Academic Researchers program.
ChatGPT was used to
assist with literature review, drafting, and editing.
All AI-assisted
outputs were reviewed and verified by the author, who takes full
responsibility for the content and conclusions of this work.

\end{document}